%%%%%%%%%%%%%%%%%%%%%%%%%%%%%%%%%%%%%%%%%%%%%%%%%%%%%%%%%%%%%%%%%%%%%%%%%%%%%
%%%%%%%%%%%%%%%%%%%%%%%%%%%%%%%%%%%%%%%%%%%%%%%%%%%%%%%%%%%%%%%%%%%%%%%%%%%%%
%%%%%%%%%%%%%%%%%%    Jeanjean-Sirakov 2010   %%%%%%%%%%%%%%%%%%%%%%%%%%%%%
%%%%%%%%%%%%%%%%%%%%%%%%%%%%%%%%%%%%%%%%%%%%%%%%%%%%%%%%%%%%%%%%%%%%%%%%%%%%%
%%%%%%%%%%%%%%%%%%%%%%%%%%%%%%%%%%%%%%%%%%%%%%%%%%%%%%%%%%%%%%%%%%%%%%%%%%%%%

\documentclass[12pt]{amsart}
\usepackage{amssymb,amsmath,mathrsfs}
\usepackage[ps2pdf,colorlinks=true,urlcolor=blue,
citecolor=red,linkcolor=blue,linktocpage,pdfpagelabels,bookmarksnumbered,bookmarksopen]{hyperref}
\usepackage[english]{babel}

%\oddsidemargin=-0.0cm
%\evensidemargin=-0.0cm
%\textwidth=15.8cm
%\textheight=23cm
%\topmargin=-.3cm
%\usepackage[left=2.7cm,right=2.7cm,top=2.7cm,bottom=2.7cm]{geometry}

%\textwidth=134mm
%\textheight=195mm

%%%%%SPAZI NUMERICI
\newcommand{\N}{{\mathbb N}}

\newcommand{\R}{{\mathbb R}}

%%%VARIE
\newcommand{\be}{\begin{equation}}
\newcommand{\ee}{\end{equation}}

%%%SPAZI FUNZIONALI

%%%%LETTERE

\newcommand{\e}{\varepsilon}

\newcommand{\la}{\lambda}

%%%%%%%%STILE
\numberwithin{equation}{section}
\newtheorem{theorem}{Theorem}[section]
\newtheorem{proposition}[theorem]{Proposition}
\newtheorem{corollary}[theorem]{Corollary}
\newtheorem{lemma}[theorem]{Lemma}

\theoremstyle{definition}
\newtheorem{remark}[theorem]{Remark}

\newcommand{\brm}{\begin{remark}\rm}
\newcommand{\erm}{\end{remark}}
\newcommand{\brms}{\begin{remark}\rm}
\newcommand{\erms}{\end{remark}}
\newcommand{\bte}{\begin{theorem}}
\newcommand{\ete}{\end{theorem}}
\newcommand{\bpr}{\begin{proposition}}
\newcommand{\epr}{\end{proposition}}
\newcommand{\ble}{\begin{lemma}}
\newcommand{\ele}{\end{lemma}}
\newcommand{\beq}{\begin{equation}}
\newcommand{\eeq}{\end{equation}}
\newcommand{\bdm}{\begin{displaymath}}
\newcommand{\edm}{\end{displaymath}}
\numberwithin{equation}{section}

\newcommand{\bos}{\begin{remark}\rm}
\newcommand{\eos}{\end{remark}}

\newcommand{\ben}{\begin{enumerate}}
\newcommand{\een}{\end{enumerate}}

\title[Some continuation properties via minimax methods] {Some continuation properties via minimax arguments}

\author[Louis Jeanjean]{Louis Jeanjean}
\address{Laboratoire de Math\'ematiques (UMR 6623)
\newline\indent
Universit\'{e} de Franche-Comt\'{e}
\newline\indent
16, Route de Gray 25030 Besan\c{c}on Cedex, France}
\email{louis.jeanjean@univ-fcomte.fr}

\begin{document}
\subjclass[2000]{}

\keywords{continuation properties, elliptic problem, minimax
methods}

\begin{abstract}
This note is devotes to some remarks regarding the use of
variational methods, of minimax type, to establish continuity type
results.
\end{abstract}
\maketitle

%\medskip
%\begin{center}
%\begin{minipage}{11cm}
%\footnotesize
%\tableofcontents
%\end{minipage}
%\end{center}

%%%%%%%%%%%%%%%%%%%%%%%%%%%%%%%%%%%%%%
%\medskip

\section{Introduction}
%\linenumbers

The aim of this note is to present some situations where continuity type results can be obtained through the use of minimax type arguments.
\medskip

To give an idea of the type of results we obtain let us first consider the
equation
\begin{equation}\label{0.1}
- \Delta u + \lambda u = V(x) g(u), \quad u \in H^1(\R^N).
\end{equation}
Here $\lambda >0$, $V$ is radially symmetric and $g$ is assumed to be a nonlinear term, superlinear at the origin and
subcritical under which~\eqref{0.1} has a non
trivial positive solution. Then assuming that, for any fixed $\lambda
>0$ equation~\eqref{0.1} has at most one positive solution we 
prove that the map $\lambda \to u_{\lambda} \in H^1(\R^N)$ is
continuous. Namely we establish the existence of a global branch
of solutions.
\medskip

As a second example consider the equation
\begin{equation}\label{0.2}
- \Delta u = |u|^{p-1}u + f(x), \quad u \in H^1_0(\Omega)
\end{equation}
where $\Omega \subset \R^N$ is an open regular bounded domain, $1 < p < \frac{N+2}{N-2}$ and $f \in L^q(\Omega)$ for some
$q > \frac{N}{2}$. We show that there exists a $\alpha >0$ such
that if $||f||_{q} \leq \alpha$ then (\ref{0.2}) admits a positive
solution on $\Omega$. When $||f||_q$ is small enough it is standard to show that there exists a solution. If $f \geq
0$, using the maximum principle, it follows that it is positive. Here we prove, without assumption on the sign of $f$, but possibly decreasing the value of $||f||_q$, that this is still true.

The note is organized as follows. In Section \ref{Section2} we present some abstract considerations. In Section \ref{Section3} we apply them to a problem of the type of~\eqref{0.1}. Section \ref{Section4} deals with the nonhomogeneous problem~\eqref{0.2}.

\section{Some abstract considerations}\label{Section2}

Let $X$ be a reflexive Banach space whose norm is denoted $||\cdot||$.  Consider for some $\e >0$ and $\la \in ]1- \e, 1 + \e[$ a familly $(I_{\la})$ of $C^1$ functionals on $X$ of the form
$$I_{\la}(u) = A(u) - \la B(u), \quad \la \in ]1- \e, 1 + \e[.$$
We assume that
\begin{itemize}
\item[(A1)] Both $B \in C^1(X, \R)$ and its derivative $B' \in C(X, \R)$ take bounded sets to bounded sets. \smallskip
\item[(A2)] For any $\lambda \in ]1- \e, 1 + \e[ \backslash \{1\}$ $I_{\la}$ has a critical point $u_{\la}$ at a level denoted $c_{\la}$. Moreover there exists a bounded interval $S \subset \R$ such that $c_{\lambda} \in S$ if
$\lambda \in ]1- \varepsilon, 1+ \varepsilon[ \backslash \{1\}$.\smallskip
\item[(A3)] For any sequence  $(\la_n) \subset ]1 - \e, 1 + \e[\backslash \{1\} $ with $\lambda_n \to 1$ the sequence
$(u_{\lambda_n}) \subset X $ is bounded. \smallskip
\item[(A4)] Any bounded Palais-Smale sequence $(v_n)$ for $I := I_1$ such
that $(I(v_n)) \subset S$ admits a converging subsequence.
\end{itemize}
Under these assumptions we have :

\begin{theorem}\label{thabstract}
Assume that (A1)-(A4) hold. Then 
\begin{itemize}
\item[(i)] There exists a critical point $u$ of  $I$ such that $I(u) \in S$. \smallskip
\item[(ii)] Any sequence $(u_{\la_n})$ with  $\la_n \in ]1 - \e, 1 + \e [  \backslash \{1\}$ and $\la_n \to 1$ converges, up to a subsequence, toward a critical point of $I$, associated to a level in $S$.
\end{itemize}
\end{theorem}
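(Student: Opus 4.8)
The plan is to use the family of critical points $u_\la$ supplied by (A2), for $\la$ close to $1$, as an approximation scheme for $I:=I_1$: as $\la\to 1$ these points turn out to be a bounded Palais--Smale sequence for $I$ at a level in $S$, and then (A4) produces the critical point. Concretely, I would fix an arbitrary sequence $(\la_n)\subset\,]1-\e,1+\e[\,\backslash\{1\}$ with $\la_n\to 1$ and set $u_n:=u_{\la_n}$, $c_n:=c_{\la_n}\in S$ as in (A2). By (A3) the sequence $(u_n)$ is bounded in $X$, which is the input (A4) will need.

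Next I would exploit the affine dependence on $\la$. Since $I_1(u)-I_{\la_n}(u)=(\la_n-1)B(u)$ and $I_1'(u)-I_{\la_n}'(u)=(\la_n-1)B'(u)$ for every $u$, evaluating at $u_n$ and using $I_{\la_n}'(u_n)=0$ and $I_{\la_n}(u_n)=c_n$ gives
\[
I'(u_n)=(\la_n-1)B'(u_n),\qquad I(u_n)=c_n+(\la_n-1)B(u_n).
\]
Now (A1) enters: the boundedness of $(u_n)$ forces $(B'(u_n))$ to be bounded in $X^*$ and $(B(u_n))$ to be bounded in $\R$, hence $I'(u_n)\to 0$ in $X^*$; and since $(c_n)$ lies in the bounded interval $S$, along a subsequence $c_n\to c$, so $I(u_n)\to c\in\overline S$. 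Thus $(u_n)$ is a bounded Palais--Smale sequence for $I$ with levels converging into $S$. Applying (A4) we extract a further subsequence with $u_n\to u$ in $X$; since $I\in C^1(X,\R)$ with $I'$ continuous, passing to the limit yields $I'(u)=0$ and $I(u)=c$. This is exactly assertion (ii). Assertion (i) then follows at once: $]1-\e,1+\e[\,\backslash\{1\}$ is nonempty, so one picks any such sequence $\la_n\to 1$, invokes (A2) to obtain the $u_n$, and applies (ii).

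I do not expect a genuine obstacle here; the argument is ``soft'', the only compactness being supplied wholesale by (A3)--(A4), and reflexivity of $X$ plays no role in this particular statement. The one point requiring a little care is the energy bookkeeping needed before (A4) can be invoked, namely that the Palais--Smale levels one manufactures really do lie in $S$ (or in $\overline S$): this is handled precisely by the identity $I(u_n)=c_n+(\la_n-1)B(u_n)$ together with the boundedness of $B$ on bounded sets, which makes the correction $(\la_n-1)B(u_n)$ infinitesimal, so that $I(u_n)$ converges to a cluster value of $(c_n)\subset S$. Reading (A4) with $S$ closed — or, harmlessly for the applications, replacing $S$ by a slightly larger closed interval — then makes the final conclusion $I(u)\in S$ fully rigorous.
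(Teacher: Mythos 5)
Your proposal is correct and follows essentially the same route as the paper: you exploit the affine dependence $I=I_{\la_n}+(\la_n-1)B$ at the critical points $u_{\la_n}$, use (A1) and (A3) to show they form a bounded Palais--Smale sequence for $I$ with levels asymptotic to $(c_{\la_n})\subset S$, and conclude via (A4), the only difference being the cosmetic one that you prove (ii) first and deduce (i) while the paper does the reverse. Your side remark that the limit level may a priori only lie in $\overline{S}$ (remedied by taking $S$ closed) is a fair observation on a point the paper passes over silently, but it does not alter the substance of the argument.
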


\begin{proof} First we prove (i). Let $(\la_n) \subset ]1 - \e, 1 + \e [  \backslash \{1\}$ satisfies $\la_n \to 1$. By (A3) the sequence $(u_n):= (u_{\la_n})$ is bounded. Now we have
$$I(u_n) = I_{\lambda_n}(u_n) + (\lambda_n -1) B(u_n)$$
$$I'(u_n)= I'_{\lambda_n}(u_n) + (\lambda_n -1) B'(u_n).$$
By (A1) it follows that
$$(\lambda_n -1) B(u_n) \to 0 \, \quad \mbox{ and } \quad (\lambda_n -1) B'(u_n) \to
0.$$ Thus
$$ I(u_n) = c_{\lambda_n} + o(1) \quad \mbox{ and } \quad I'(u_n) = o(1).$$
Since, by (A2), the sequence $(c_{\lambda_n}) \subset S$ is
bounded, it follows that $(u_{\lambda_n})$ is a (bounded)
Palais-Smale sequence for $I$. By (A4) we then know that, passing to a subsequence,
$u_n \to u$ with $u \in X$ a critical point of $I$ associated to a
value in $S$. This proves (i). Now (ii) follows from the proof of (i).  
\end{proof}

\begin{corollary}\label{coabstract}
Assume that (A1)-(A4) hold and that for $\lambda =1$ there exists at most a critical point $u \in X$ corresponding to a value in $S$. Then as $\la \to 1$ we have $u_{\la} \to u$. In particular 
$c_{\lambda} \to c$, where $c = I(u)$.

\end{corollary}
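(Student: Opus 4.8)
The plan is to run a standard ``extract a subsequence and invoke uniqueness'' argument, treating Theorem \ref{thabstract} as a black box, so that no new compactness or a priori estimate has to be proved. First I would apply Theorem \ref{thabstract}(i) to produce a critical point $u$ of $I$ with $I(u)\in S$; the hypothesis that $I=I_1$ has \emph{at most} one critical point corresponding to a value in $S$ then makes $u$ uniquely determined, and I set $c:=I(u)\in S$. Thus the objects ``$u$'' and ``$c$'' in the statement are well defined.

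Next, to prove $u_\lambda\to u$ as $\lambda\to1$, I would argue by contradiction along sequences. If the convergence failed, there would exist $\delta>0$ and a sequence $\lambda_n\in\,]1-\e,1+\e[\,\setminus\{1\}$ with $\lambda_n\to1$ and $\|u_{\lambda_n}-u\|\ge\delta$ for all $n$. Applying Theorem \ref{thabstract}(ii) to this particular sequence, some subsequence of $(u_{\lambda_n})$ converges in $X$ to a critical point $v$ of $I$ with $I(v)\in S$. By the uniqueness hypothesis $v=u$, which contradicts $\|u_{\lambda_n}-u\|\ge\delta$. Hence every sequence $\lambda_n\to1$ has a subsequence along which $u_{\lambda_n}\to u$, and since the limit is always the same point $u$, in fact $u_{\lambda_n}\to u$ along the whole sequence, i.e. $u_\lambda\to u$ as $\lambda\to1$.

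Finally, for the convergence of the levels I would recall from the proof of Theorem \ref{thabstract} the identity $I(u_{\lambda_n})=I_{\lambda_n}(u_{\lambda_n})+(\lambda_n-1)B(u_{\lambda_n})=c_{\lambda_n}+o(1)$, valid for any sequence $\lambda_n\to1$ by (A1) together with (A3). Since $u_{\lambda_n}\to u$ and $I\in C^1(X,\R)$, we get $I(u_{\lambda_n})\to I(u)=c$, whence $c_{\lambda_n}\to c$; as this holds for every sequence $\lambda_n\to1$, we conclude $c_\lambda\to c$. The only point requiring a little care — and it is not a genuine obstacle — is that Theorem \ref{thabstract} only yields subsequential limits, so the argument must be phrased along sequences and the passage to full convergence as $\lambda\to1$ must be made via the uniqueness of the limit point; everything else has already been absorbed into hypotheses (A1)--(A4) and into Theorem \ref{thabstract}.
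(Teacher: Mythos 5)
Your proposal is correct and follows essentially the same route as the paper: a contradiction argument along a sequence $\lambda_n\to 1$ with $\|u_{\lambda_n}-u\|\ge\delta$, resolved by invoking Theorem~\ref{thabstract} (part (ii), i.e.\ the argument of its proof) together with the uniqueness hypothesis. Your additional explicit verification that $c_{\lambda}\to c$, via $I(u_{\lambda_n})=c_{\lambda_n}+o(1)$ and continuity of $I$, is exactly the step the paper leaves implicit and is carried out correctly.
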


\begin{proof}
If we assume by contradiction that $u_{\lambda}$ do not converge
toward $u$ if $\lambda \to 1$ then on one hand there exists a sequence
$(\la_n) \subset ]1 - \e, 1 + \e [  \backslash \{1\}$ with $\lambda_n \to 1$ and a  $\delta >0$ such that
$||u_{\lambda_n} -u || \geq \delta
>0.$ On the other hand repeating the proof of Theorem~\ref{thabstract} on the sequence
$(u_{\lambda_n})$ we arrive at the conclusion that, up to a
subsequence, $u_{\lambda_n} \to u$. This is a  contradiction.
\end{proof}

\begin{remark}
Using the results of \cite{Je2} it can be shown, under very general assumptions on the family $I_{\la}$, that for almost any $\la \in ]1- \e, 1 + \e [$, $I_{\la}$ admit a bounded Palais-Smale sequence whose value stays within a compact. So if for any $\la \in ]1 - \e, 1 + \e [$ any bounded Palais-Smale sequence for $I_{\la}$ admit a converging subsequence we see that assumption (A2) holds.
\end{remark}

\section{Problems on $\R^N$}\label{Section3}

\subsection{Autonomous cases}
We consider the equation
\begin{equation}\label{3.01}
- \Delta u + \lambda u = g(u), \quad u \in H^1(\R^N), \quad  N \geq 3
\end{equation}
where we assume that $g \in C(\R, \R)$ satisfies
\begin{itemize}
\item[(H1)]  $g(s) /s \to 0$ as $s\to 0$. \smallskip
\item[(H2)]  For some $p \in ]1, \frac{N+2}{N-2}[$ 
$$g(s)/ |s|^p \to 0 \quad \mbox{as } s \to \infty.$$ 
\item[(H3)] There exists $s_0 >0$ such that $G(s_0) >0$ with $G(s) :=
\int_0^s g(t)dt.$ \smallskip
\end{itemize}
The natural functional associated to (\ref{3.01})
is defined on $H:=H^1(\R^N)$ by
$$I_{\la}(u) = \frac{1}{2}\int_{\R^N}|\nabla u|^2 + \lambda |u|^2 dx -
\int_{\R^N}G(u) \thinspace dx.$$
It is standard that under (H1)-(H2) $I_{\lambda}$ is well
defined and of class $C^1$. We define the least energy level
\begin{equation}\label{3.2}
m_{\lambda}= inf\{ I_{\lambda}(u), u \in H \backslash \{0\},
I_{\la}'(u)=0 \}
\end{equation}
and the set of least energy solution
$$G_{\lambda}= \{ u \in H \backslash \{0\}, I_{\la}'(u)=0, I_{\la}(u) = m_{\lambda}\}.$$
\vskip4pt Also let $\lambda^* = sup\{ \lambda >0$ : $\exists \nu
>0$ such that $ G(\nu) - \lambda /2 \nu^2 >0 \}$. \medskip

From~\cite{BeLi} it is known that, under
(H1)-(H3) and for any $0 < \lambda < \lambda^* $, $m_{\lambda}
>0$ and $G_{\lambda} \neq \emptyset$ contains a positive element. In addition it is shown in~\cite{JeTa1} that $m_{\lambda}$ admits a mountain pass
characterization. Namely that
\begin{equation}\label{defmin}
m_{\la} := \inf_{g \in \Gamma_{\la}} \max_{t \in [0,1]} I_{\la}(g(t))
\end{equation}
where 
$$\Gamma_{\la}:= \{ g \in C([0,1], H), g(0) = 0, I_{\la}(g(1)) <0\}.$$
This characterization implies that $\la \to m_{\la}$ is nondecreasing.
Lastly we known from~\cite{ByJeMa} that any
element of $G_{\lambda}$ is radially symmetric and have a given sign. We prove the
following result.

\begin{theorem}\label{thautonomous}
Assume that (H1)-(H3) hold. Let $\lambda_0 \in ]0, \lambda^*[$ and \\
$\{(\lambda_n, u_n)\} \subset ]0,  \lambda^*[ \times H$ be a sequence
such that
$$u_n \in G_{\lambda_n} \, \mbox{ and } \, 
\lambda_n \to \lambda_0 \, \mbox{ as } \, n \to \infty.$$ Then there
exist a $u_0 \in G_{\la_0}$ and a subsequence $(u_{n_k})$
of $(u_n)$ such that
$$u_{n_k} \to u_0 \, \mbox{ as } \, k \to \infty.$$
In particular $\la \to m_{\la}$ is continuous.
\end{theorem}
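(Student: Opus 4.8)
The strategy is to cast Theorem~\ref{thautonomous} as an application of Theorem~\ref{thabstract} (and Corollary~\ref{coabstract}) by a rescaling that turns the family $\{I_{\la}\}_{\la}$ near $\la_0$ into a family of the form $A - \la B$ near $\la = 1$. Concretely, for $\la$ near $\la_0$ write $\la = \la_0 t^2$ with $t$ near $1$ (or rescale $u$ in space so that the quadratic part becomes $\frac12\|u\|_{H^1}^2$ up to the parameter); the point is that after this normalization the functional takes the form $I_t(u) = A(u) - t\,B(u)$ on a fixed Hilbert space, with $B$ carrying the nonlinear term $\int G(u)$. One then verifies (A1)--(A4): (A1) follows from the subcritical growth (H1)--(H2) via the Sobolev embedding, which makes $u \mapsto \int G(u)$ and its derivative map bounded sets to bounded sets; (A2) holds because for $\la \neq \la_0$ the least energy level $m_\la$ is a critical value, and the monotonicity and continuity-from-one-side of $\la \mapsto m_\la$ (from the mountain-pass characterization~\eqref{defmin}) confine the relevant levels to a bounded interval $S$ around $m_{\la_0}$; (A3) is the statement that least-energy solutions stay bounded in $H^1$ as $\la_n \to \la_0$, which follows from the uniform bound on $m_{\la_n}$ together with a standard Nehari/Pohozaev-type argument.

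The crux is (A4): \emph{any bounded Palais--Smale sequence for $I_{\la_0}$ at a level in $S$ has a convergent subsequence}. This is exactly the point where compactness can fail on $\R^N$ because of translations, and it is the main obstacle. The plan here is to exploit that we are not dealing with an arbitrary PS level but with one forced into a neighborhood of the \emph{least} energy level $m_{\la_0}$. Given a bounded PS sequence $(v_n)$ with $I_{\la_0}(v_n) \to c \in S$, one first rules out vanishing using (H3) (if $\|v_n\|_{L^q} \to 0$ for the relevant $q$, then $I_{\la_0}(v_n) \to 0$ and $\|v_n\| \to 0$, contradicting that the levels in $S$ are bounded away from $0$ — here one shrinks $\e$ so that $S$ stays in a small neighborhood of $m_{\la_0} > 0$); then, by the usual translation argument, a translated subsequence converges weakly to a nonzero critical point $w$ of $I_{\la_0}$, hence $I_{\la_0}(w) \geq m_{\la_0}$. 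A splitting/Brezis--Lieb argument shows the "lost mass" carries energy $\geq 0$, and combined with $c$ being close to $m_{\la_0}$ this forces no loss of mass, i.e.\ strong convergence (after translation). One must be slightly careful that the $u_n \in G_{\la_n}$ are radial (by~\cite{ByJeMa}), so working in $H^1_{rad}(\R^N)$ restores compactness of the subcritical embeddings and simplifies this step considerably — I would set the whole argument in the radial subspace from the start.

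Once (A1)--(A4) are in place, Theorem~\ref{thabstract}(ii) yields that $(u_n)$ (after the rescaling, and up to a subsequence) converges strongly to some $u_0 \in H^1_{rad}(\R^N)$ which is a critical point of $I_{\la_0}$ at a level in $S$. Undoing the rescaling gives a subsequence $u_{n_k} \to u_0$ in $H^1(\R^N)$ with $I_{\la_0}'(u_0) = 0$. It remains to check $u_0 \in G_{\la_0}$, i.e.\ that $u_0$ is a \emph{least} energy solution: since $I_{\la_0}(u_0) = \lim I_{\la_n}(u_n) = \lim m_{\la_n}$ and $m_{\la_0} \leq I_{\la_0}(u_0)$ (as $u_0 \neq 0$ is a critical point — nontriviality again from the level being near $m_{\la_0}>0$), it suffices to show $\limsup m_{\la_n} \leq m_{\la_0}$, which follows from the mountain-pass characterization~\eqref{defmin} by fixing a near-optimal path for $\la_0$ and estimating $I_{\la_n}$ along it (continuity of $\la \mapsto \max_t I_\la(g(t))$ for fixed $g$). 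This gives $I_{\la_0}(u_0) = m_{\la_0}$, so $u_0 \in G_{\la_0}$. Finally, $m_{\la_n} = I_{\la_n}(u_n) \to I_{\la_0}(u_0) = m_{\la_0}$ along the subsequence; since the limit $m_{\la_0}$ is independent of the subsequence and $\la \mapsto m_\la$ is monotone, the full limit $m_{\la_n} \to m_{\la_0}$ holds, proving continuity of $\la \mapsto m_\la$.
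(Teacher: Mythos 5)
Your proposal is correct and follows essentially the same route as the paper: boundedness of $(u_n)$ via the Pohozaev identity and the monotonicity of the mountain pass levels, the observation that $(u_n)$ is then a bounded Palais--Smale sequence for $I_{\lambda_0}$, compactness recovered by working in $H^1_r(\R^N)$ (justified by the symmetry result of \cite{ByJeMa} and symmetric criticality), and identification of the limit as a least energy solution through the convergence $m_{\lambda_n}\to m_{\lambda_0}$ coming from the characterization \eqref{defmin}. The only cosmetic differences are that you route the argument explicitly through Theorem~\ref{thabstract} via a reparametrization (the paper re-proves the corresponding steps directly as Lemmas~\ref{bounded-au}--\ref{PS-hold-initial-au}) and that you obtain $\limsup m_{\lambda_n}\le m_{\lambda_0}$ by evaluating $I_{\lambda_n}$ along a fixed near-optimal path, where the paper invokes monotonicity of $\lambda\mapsto m_\lambda$; your path argument is, if anything, a slightly more complete way to handle sequences approaching $\lambda_0$ from above.
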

The proof of Theorem \ref{thautonomous} will follow from three lemmas. Since any $G_{\la}$  only contains radially symmetric functions we can, without restriction, work in the subspace 
$$H^1_r(\R^N):=\{ u \in H^1(\R^N), u(x) = u(|x|) \}.$$
We also recall that any critical point of $I_{\la}$ in $H^1_r(\R^N)$ is, by the principle of symmetric criticality of Palais, also a critical point on all $H^1(\R^N)$.
We set $H_r := H_r^1(\R^N)$.

\begin{lemma}\label{bounded-au}
Under the assumptions of Theorem \ref{thautonomous} the sequence $(u_n) \subset H_r$ is bounded.
\end{lemma}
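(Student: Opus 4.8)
The plan is to establish the boundedness of $(u_n)$ in $H_r$ using the standard two ingredients available for least energy solutions of autonomous problems: the Nehari/Pohozaev type identities and the fact that the levels $m_{\lambda_n}$ stay bounded. First I would observe that since $\lambda_n \to \lambda_0 \in ]0, \lambda^*[$, for $n$ large we have $\lambda_n \in [a, b]$ with $0 < a < b < \lambda^*$; by the monotonicity of $\lambda \mapsto m_\lambda$ recalled after~\eqref{defmin} we get $m_{\lambda_n} \in [m_a, m_b]$, so $I_{\lambda_n}(u_n) = m_{\lambda_n}$ is bounded, and of course $I'_{\lambda_n}(u_n) = 0$.

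Next I would exploit that $u_n$ is a genuine solution, hence satisfies the Pohozaev identity associated to~\eqref{3.01}:
\[
\frac{N-2}{2}\int_{\R^N}|\nabla u_n|^2\,dx + \frac{N}{2}\lambda_n\int_{\R^N}|u_n|^2\,dx = N\int_{\R^N}G(u_n)\,dx,
\]
together with the equation tested against $u_n$ itself:
\[
\int_{\R^N}|\nabla u_n|^2\,dx + \lambda_n\int_{\R^N}|u_n|^2\,dx = \int_{\R^N}g(u_n)u_n\,dx.
\]
Combining these two with the energy identity $I_{\lambda_n}(u_n) = m_{\lambda_n}$ (which also relates the gradient term, the $L^2$ term and $\int G(u_n)$) gives, after linear algebra, an expression for $\int_{\R^N}|\nabla u_n|^2\,dx$ purely in terms of $m_{\lambda_n}$; in fact the classical computation yields $\int_{\R^N}|\nabla u_n|^2\,dx = \frac{2N}{2} \cdot \text{(something)} \cdot m_{\lambda_n}$ — more precisely one obtains $\int |\nabla u_n|^2 = N\, m_{\lambda_n}$ after eliminating the potential and nonlinear terms — hence the gradient norms are bounded. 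Controlling the $L^2$ norm then requires a separate argument, since the Pohozaev/energy relations alone bound $\int |\nabla u_n|^2$ and $\int G(u_n)$ but leave $\lambda_n\int |u_n|^2$ determined only through these same combinations; the cleanest route is to note from the two displayed identities that $\lambda_n \int |u_n|^2\,dx$ is also a fixed linear combination of $m_{\lambda_n}$ and $\int |\nabla u_n|^2$, hence bounded, and since $\lambda_n \geq a > 0$ we conclude $\int |u_n|^2\,dx \leq a^{-1}(\text{bound})$.

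The main obstacle I anticipate is justifying the Pohozaev identity with the minimal regularity $g \in C(\R,\R)$ under (H1)--(H2): one must know $u_n \in W^{2,2}_{loc}$ with enough decay for the boundary terms in the Pohozaev computation to vanish. This is standard — elliptic regularity bootstrapping using the subcritical growth (H2) gives $u_n \in H^1 \cap L^\infty$ and then exponential decay since $\lambda_n > 0$ — so I would simply invoke these known facts (as the paper does by citing~\cite{BeLi}, \cite{JeTa1}, \cite{ByJeMa} for the structural properties of $G_\lambda$) rather than reprove them. A secondary, minor point is uniformity: the decay and $L^\infty$ bounds should be uniform in $n$, which follows because $\lambda_n$ stays in the compact $[a,b]$ and the $H^1$ bound on $u_n$ (once gradient-boundedness is in hand) feeds back into the bootstrap with constants depending only on $a,b$ and the growth of $g$. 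Putting the gradient bound and the $L^2$ bound together yields the claimed boundedness of $(u_n)$ in $H_r$.
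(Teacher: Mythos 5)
Your argument for the gradient bound is exactly the paper's: combining the Pohozaev identity with $I_{\lambda_n}(u_n)=m_{\lambda_n}$ gives $\|\nabla u_n\|_2^2=N\,m_{\lambda_n}$, and the monotonicity of $\lambda\mapsto m_\lambda$ from \eqref{defmin} keeps $m_{\lambda_n}$ bounded. The gap is in your treatment of the $L^2$ norm. Your claim that $\lambda_n\int_{\R^N}|u_n|^2\,dx$ is ``a fixed linear combination of $m_{\lambda_n}$ and $\int|\nabla u_n|^2$'' determined by the displayed identities is false. Write $A=\|\nabla u_n\|_2^2$, $B=\lambda_n\|u_n\|_2^2$, $C=\int G(u_n)$, $D=\int g(u_n)u_n$. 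The energy identity reads $\tfrac12 A+\tfrac12 B-C=m_{\lambda_n}$, Pohozaev reads $\tfrac{N-2}{2N}A+\tfrac12 B=C$, and the Nehari relation reads $A+B=D$. Combining energy and Pohozaev, the term $B$ cancels identically (that is precisely how one gets $A=Nm_{\lambda_n}$), so these two relations carry no information on $B$; and the Nehari relation only ties $B$ to the new quantity $D=\int g(u_n)u_n$, which is not a priori bounded. So pure linear algebra on the identities cannot bound $\|u_n\|_2$, and your fallback sentence does not close the argument.

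The missing ingredient, which is how the paper concludes, is the growth information in (H1)--(H2): for every $\delta>0$ there is $C_\delta>0$ with $|g(s)|\le \delta|s|+C_\delta|s|^{\frac{N+2}{N-2}}$, so that from $I'_{\lambda_n}(u_n)u_n=0$ and the Sobolev embedding,
\begin{equation*}
\lambda_n\int_{\R^N}|u_n|^2\,dx\ \le\ \int_{\R^N}g(u_n)u_n\,dx\ \le\ \delta\int_{\R^N}|u_n|^2\,dx+C_\delta C\,\|\nabla u_n\|_2^{\frac{2N}{N-2}},
\end{equation*}
and since $\lambda_n\to\lambda_0>0$ one may take $\delta<\lambda_0/2$ and absorb the first term on the right, the second being bounded by the gradient estimate already obtained. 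With this replacement your proof matches the paper's. Your remarks about justifying the Pohozaev identity under $g\in C(\R,\R)$ are fine (the paper simply cites \cite{BeLi}), and the comments on uniform decay and $L^\infty$ bounds are not needed for this lemma.
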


\begin{proof}
Since $u_n, n \in \N$ is a critical point of $I_{\la_n}$ we know from \cite{BeLi} that it satisfies the Pohozaev identity
$$(N-2) ||\nabla u_n||^2_2 = 2N \Big[ - \frac{\la_n}{2} ||u_n||_2^2 + \int_{\R^N} G(u_n) dx \Big]$$
and thus we have
\begin{equation} \label{auto}
I_{\la_n}(u_n) = \frac{1}{N} ||\nabla u_n||^2_2.
\end{equation}
By the mountain pass characterization (\ref{defmin}) the function $\la \to m_{\la}$ is non decreasing and since $I_{\la_n}(u_n) = m_{\la_n}$ we deduce from (\ref{auto}) that $(||\nabla u_n||^2_2) \subset \R$ is bounded. Now since $I_{\la_n}'(u_n)u_n = 0$ we have
\begin{equation}\label{3.1}
\int_{\R^N} |\nabla u_n|^2 + \la_n |u_n|^2 \thinspace dx = \int_{\R^N}g(u_n)u_n \thinspace dx.
\end{equation}
By (H1)-(H2) for any $\delta >0$ there exists $C_{\delta} >0$ such that
$$|g(s)| \leq \delta |s| + C_{\delta} |s|^{\frac{N+2}{N-2}} \quad \mbox{for all } s \in \R.$$
Thus using the Sobolev embeddings, it follows from (\ref{3.1}) that, for a $C>0$,
$$\la_n \int_{\R^N} |u_n|^2 \thinspace dx \leq \delta \int_{\R^N} |u_n|^2 \thinspace dx + C_{\delta} C ||\nabla u_n||_2^{\frac{2N}{N-2}}.$$
Since $\lambda_n \to \la_0 >0$, choosing $\delta >0$ sufficiently small and using the fact that $(||\nabla u_n||_2) \subset \R$ is bounded we see that
$(||u_n||_2) \subset \R$ is also bounded.
\end{proof}

\begin{lemma}\label{PS-hold-au}
Under the assumptions of Theorem \ref{thautonomous} any bounded Palais-Smale sequence for $I_{\lambda_0}$ admits a
converging subsequence.
\end{lemma}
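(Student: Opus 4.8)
The plan is to recover, by working in the radial subspace, the compactness that is lost on the whole of $\R^N$. As recalled just before the lemma we work in $H_r := H^1_r(\R^N)$, the point being that the embedding $H_r \hookrightarrow L^q(\R^N)$ is compact for every $q \in \, ]2, \frac{2N}{N-2}[$ (the radial compactness lemma of Strauss). So I would start from a bounded Palais--Smale sequence $(v_n) \subset H_r$ for $I_{\la_0}$, i.e. $\sup_n \|v_n\| < \infty$ and $I_{\la_0}'(v_n) \to 0$ in $(H_r)^*$, and extract a subsequence with $v_n \rightharpoonup v$ weakly in $H_r$, $v_n \to v$ strongly in $L^q(\R^N)$ for all $q \in \, ]2, \frac{2N}{N-2}[$, and $v_n \to v$ a.e. in $\R^N$.

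The key step will be to prove that $\int_{\R^N} g(v_n)(v_n - v)\,dx \to 0$. For this I would use that, by (H1)--(H2) with $p \in \, ]1, \frac{N+2}{N-2}[$ as in (H2), for every $\eta > 0$ there is $C_\eta > 0$ with
$$|g(s)| \le \eta |s| + C_\eta |s|^p \qquad \text{for all } s \in \R ,$$
whence, by H\"older's inequality,
$$\Big| \int_{\R^N} g(v_n)(v_n - v)\,dx \Big| \le \eta \, \|v_n\|_2 \, \|v_n - v\|_2 + C_\eta \, \|v_n\|_{p+1}^{\,p} \, \|v_n - v\|_{p+1} .$$
Since $(v_n)$ is bounded in $H^1(\R^N)$, the factors $\|v_n\|_2$, $\|v_n - v\|_2$, $\|v_n\|_{p+1}$ stay bounded, while $\|v_n - v\|_{p+1} \to 0$ because $p+1 \in \, ]2, \frac{2N}{N-2}[$ and the radial embedding is compact; letting first $n \to \infty$ and then $\eta \to 0$ gives the claim.

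To conclude, I would use that, since $\la_0 > 0$, the bilinear form $\langle u, w\rangle_{\la_0} := \int_{\R^N} \nabla u \cdot \nabla w + \la_0\, u\, w\,dx$ is an inner product on $H^1(\R^N)$ whose norm is equivalent to $\|\cdot\|$. Testing $I_{\la_0}'(v_n)$ with $v_n - v \in H_r$ gives
$$\langle v_n, v_n - v\rangle_{\la_0} = I_{\la_0}'(v_n)(v_n - v) + \int_{\R^N} g(v_n)(v_n - v)\,dx \longrightarrow 0 ,$$
the first term because $I_{\la_0}'(v_n) \to 0$ and $(v_n - v)$ is bounded, the second by the previous step; moreover $\langle v, v_n - v\rangle_{\la_0} \to 0$ since $v_n - v \rightharpoonup 0$. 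Subtracting yields $\|v_n - v\|_{\la_0}^2 \to 0$, i.e. $v_n \to v$ strongly in $H_r$ (and passing to the limit in $I_{\la_0}'(v_n) \to 0$ shows $v$ is a critical point of $I_{\la_0}$). The only genuine obstacle is the absence of compactness for $H^1(\R^N) \hookrightarrow L^q(\R^N)$, which is exactly why one restricts to radial functions --- legitimate here because, as recalled above, the elements of $G_\la$ are radial and, by the principle of symmetric criticality, radial critical points of $I_{\la_0}$ are critical points on all of $H^1(\R^N)$; the subcriticality in (H2) is what makes the nonlinear term compact, and $\la_0 > 0$ is what makes the quadratic part coercive.
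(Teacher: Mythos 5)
Your proof is correct, and it rests on exactly the same two pillars as the paper's: the compact radial embedding $H^1_r(\R^N)\hookrightarrow L^q(\R^N)$ for $q\in\,]2,\tfrac{2N}{N-2}[$ and the subcritical growth bound $|g(s)|\le \eta|s|+C_\eta|s|^p$ coming from (H1)--(H2). The only difference is the finishing mechanism. The paper shows that $g(u_n)\to g(u)$ in $H_r^{-1}$ and then writes the Palais--Smale condition as $-\Delta u_n+\la_0 u_n\to g(u)$ in $H_r^{-1}$, concluding by continuity of the inverse of the linear operator $L=-\Delta+\la_0$ together with uniqueness of the weak limit. You instead test $I_{\la_0}'(v_n)$ against $v_n-v$, show $\int_{\R^N} g(v_n)(v_n-v)\,dx\to 0$ via the $\eta$-splitting and strong $L^{p+1}$ convergence, and use weak convergence to kill $\langle v, v_n-v\rangle_{\la_0}$, obtaining $\|v_n-v\|_{\la_0}\to 0$ directly since $\la_0>0$ makes that norm equivalent to the $H^1$ norm. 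Your route has the small advantage of never needing convergence of the nonlinear term in the dual norm (which in the paper is asserted ``readily'' but, since strong convergence is only available in $L^q$ for $q>2$ and not in $L^2$, really requires the same kind of $\eta$-splitting you perform); the paper's route is slightly more compact to state once that dual convergence is granted. Both arguments are complete and yield the lemma.
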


\begin{proof}
Let $(u_n) \subset H_r$ be a bounded Palais-Smale sequence for $I_{\la_0}$. Since $(u_n) \subset H_r$ is bounded we have that $u_n \rightharpoonup u$ in $H$ and $u_n \to u$ in $L^p(\R^N)$ for $p \in ]2, \frac{2N}{N-2}[$ (see \cite{St}). Now since $(u_n) \subset H_r$ is a Palais-Smale sequence we have, in the dual $H_r^{-1}$,
$$- \Delta u_n + \lambda_n u_n - g(u_n) \to 0.$$
Using the strong convergence of $(u_n)$ we readily deduce from (H1)-(H2) that $g(u_n) \to g(u)$ in $H_r^{-1}$. Thus
\begin{equation}\label{eq:3.2}
- \Delta u_n + \la u_n \to g(u) \, \mbox{ in } \, H_r^{-1}.
\end{equation}
Now let $L: H_r \to H_r^{-1}$ be defined by
$$(Lu)v = \int_{\Omega} \nabla u \nabla v + \lambda uv \thinspace dx.$$
This operator is invertible and so we deduce from (\ref{eq:3.2}) that
$$u_n \to L^{-1}(G(u)) \, \mbox{ in } \, H_r.$$
Consequently by the uniqueness of the limit $u_n \to u$ in $H_r$.
\end{proof}

\begin{lemma}\label{PS-hold-initial-au}
The sequence $(u_n) \subset H_r$ is a, bounded, Palais-Smale
sequence for $I_{\lambda_0}$ at the level $m_{\lambda_0}$.
\end{lemma}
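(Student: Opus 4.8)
The plan is to check the three defining properties of a Palais--Smale sequence for $I_{\lambda_0}$ at the level $m_{\lambda_0}$ one at a time. Boundedness of $(u_n)$ is precisely Lemma~\ref{bounded-au}. For the rest, write, with $B(u)=\frac12\int_{\R^N}|u|^2\,dx$,
\[
I_{\lambda_0}(u_n)=I_{\lambda_n}(u_n)+(\lambda_0-\lambda_n)B(u_n),\qquad
I'_{\lambda_0}(u_n)=I'_{\lambda_n}(u_n)+(\lambda_0-\lambda_n)B'(u_n).
\]
Since $u_n\in G_{\lambda_n}$ we have $I'_{\lambda_n}(u_n)=0$ and $I_{\lambda_n}(u_n)=m_{\lambda_n}$, and since $(u_n)$ is bounded in $H_r$ and $\lambda_n\to\lambda_0$, the correction terms $(\lambda_0-\lambda_n)B(u_n)$ and $(\lambda_0-\lambda_n)B'(u_n)$ tend to $0$ (in $\R$ and in $H_r^{-1}$ respectively). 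Hence $I'_{\lambda_0}(u_n)\to0$ and $I_{\lambda_0}(u_n)=m_{\lambda_n}+o(1)$, so the whole statement reduces to proving $m_{\lambda_n}\to m_{\lambda_0}$.

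For $\limsup_n m_{\lambda_n}\le m_{\lambda_0}$ I would use the mountain pass characterization~\eqref{defmin}: given $\eta>0$ choose $g\in\Gamma_{\lambda_0}$ with $\max_{[0,1]}I_{\lambda_0}(g(\cdot))\le m_{\lambda_0}+\eta$. Since $g([0,1])$ is compact in $H$, $M:=\sup_{[0,1]}\|g(t)\|_2^2<\infty$; and since $I_{\lambda_0}(g(1))<0$ we still have $I_\lambda(g(1))<0$, i.e. $g\in\Gamma_\lambda$, for $\lambda$ close to $\lambda_0$. Then $m_\lambda\le\max_{[0,1]}I_\lambda(g(\cdot))\le m_{\lambda_0}+\eta+\frac12|\lambda-\lambda_0|M$, whence $\limsup_{\lambda\to\lambda_0}m_\lambda\le m_{\lambda_0}$ after letting $\eta\to0$.

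For the reverse inequality I would extract a subsequence with $u_n\rightharpoonup u_0$ in $H_r$; by the compact embedding $H_r\hookrightarrow L^{p+1}(\R^N)$ (with $p$ as in (H2)) one has $u_n\to u_0$ in $L^{p+1}$, and arguing as in Lemma~\ref{PS-hold-au} $g(u_n)\to g(u_0)$ in $H_r^{-1}$; passing to the limit in $-\Delta u_n+\lambda_n u_n=g(u_n)$ gives $I'_{\lambda_0}(u_0)=0$. The delicate point is that $u_0\neq0$: if $u_0=0$ then $g(u_n)\to0$ in $H_r^{-1}$, and since $-\Delta+\lambda_n$ is invertible on $H_r$ with inverse bounded uniformly for $\lambda_n\ge\lambda_0/2$, we would get $u_n\to0$ in $H_r$, hence $m_{\lambda_n}=I_{\lambda_n}(u_n)=\frac1N\|\nabla u_n\|_2^2\to0$, contradicting $m_{\lambda_n}\ge m_{\lambda_0/2}>0$ (monotonicity of $\lambda\mapsto m_\lambda$ together with the positivity from~\cite{BeLi}, valid since $\lambda_0/2<\lambda^*$). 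Thus $u_0$ is a nontrivial critical point of $I_{\lambda_0}$, so $I_{\lambda_0}(u_0)\ge m_{\lambda_0}$ by the definition~\eqref{3.2}. Finally both $u_n$ and $u_0$ satisfy the Pohozaev identity, so by~\eqref{auto} $m_{\lambda_n}=\frac1N\|\nabla u_n\|_2^2$ and $I_{\lambda_0}(u_0)=\frac1N\|\nabla u_0\|_2^2$; weak lower semicontinuity of the Dirichlet norm then gives $\liminf_n m_{\lambda_n}\ge I_{\lambda_0}(u_0)\ge m_{\lambda_0}$. As this holds along a subsequence of any subsequence, it holds for the full sequence, and combined with the previous paragraph $m_{\lambda_n}\to m_{\lambda_0}$, hence $I_{\lambda_0}(u_n)\to m_{\lambda_0}$.

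The routine parts are the estimates killing the correction terms $(\lambda_0-\lambda_n)B(u_n)$, $(\lambda_0-\lambda_n)B'(u_n)$ and the path-comparison for the upper bound. The real obstacle is the lower bound $\liminf_n m_{\lambda_n}\ge m_{\lambda_0}$: it requires producing a weak limit of $(u_n)$, identifying it as a critical point of the limiting functional, and — the genuinely subtle step — excluding that this limit is trivial, which is exactly where the uniform positivity $m_{\lambda_n}\ge m_{\lambda_0/2}>0$ and the uniform invertibility of $-\Delta+\lambda_n$ are used; once nontriviality is secured, the Pohozaev identity plus weak lower semicontinuity close the argument.
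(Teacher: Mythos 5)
Your proposal is correct, but for the key step (the convergence of the levels $m_{\lambda_n}\to m_{\lambda_0}$) it takes a genuinely different route from the paper. The paper, after establishing exactly as you do that $(u_n)$ is a bounded Palais--Smale sequence for $I_{\lambda_0}$ with $I_{\lambda_0}(u_n)=m_{\lambda_n}+o(1)$, immediately invokes Lemma~\ref{PS-hold-au} to get strong convergence $u_n\to u_0$ to a critical point, deduces $\lim_n m_{\lambda_n}=I_{\lambda_0}(u_0)\geq m_{\lambda_0}$, and then obtains the matching upper bound only through the monotonicity of $\lambda\mapsto m_\lambda$ combined with the consideration of sequences increasing to $\lambda_0$ (left-continuity); the nontriviality of $u_0$, needed for the inequality $I_{\lambda_0}(u_0)\geq m_{\lambda_0}$, is left implicit. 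You instead prove the upper bound $\limsup_n m_{\lambda_n}\leq m_{\lambda_0}$ directly from the mountain pass characterization \eqref{defmin} by perturbing a near-optimal path, which is insensitive to the side from which $\lambda_n$ approaches $\lambda_0$ and does not rely on monotonicity; and for the lower bound you work with the weak limit, passing to the limit in the equation, excluding $u_0=0$ via the uniform invertibility of $-\Delta+\lambda_n$ together with $m_{\lambda_n}\geq m_{\lambda_0/2}>0$, and concluding via the Pohozaev identity \eqref{auto} and weak lower semicontinuity of the Dirichlet norm. What your approach buys is robustness and completeness: the two-sided upper bound and the explicit nontriviality argument fill in points the paper treats tersely. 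What it costs is some redundancy: once you know $(u_n)$ is a bounded Palais--Smale sequence for $I_{\lambda_0}$ with levels in a bounded set, Lemma~\ref{PS-hold-au} already yields strong convergence of a subsequence, so the weak-convergence, Pohozaev and lower-semicontinuity machinery in your lower bound could be replaced by the direct passage to the limit $I_{\lambda_n}(u_n)\to I_{\lambda_0}(u_0)$, keeping only your nontriviality argument (or the positivity $m_{\lambda_n}\geq m_{\lambda_0/2}>0$) to ensure $I_{\lambda_0}(u_0)\geq m_{\lambda_0}$.
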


\begin{proof}
We already know that the sequence $(m_{\la_n}) \subset \R$ is bounded. Now we have 
$$I_{\lambda_0}(u_n) = I_{\lambda_n}(u_n) + (\lambda_n - \lambda_0)
||u_n||^2_2$$
$$I_{\lambda_0}'(u_n) = I_{\lambda_n}'(u_n) + (\lambda_n -
\lambda_0) u_n.$$ 
Thus, since $I_{\lambda_n}'(u_n) =0$ and $(u_n)
\subset H_r$ is bounded we have $I'_{\lambda_0}(u_n) \to 0.$ Also
$(\lambda_n - \lambda_0) ||u_n||^2_2 \to 0$ and we deduce that
$(I_{\lambda_0}(u_n)) \subset \R$ is bounded. This proves that
$(u_n) \subset H_r$ is a bounded Palais-Smale sequence for
$I_{\lambda_0}$. From Lemma~\ref{PS-hold-au} we deduce that $u_n
\to u_0$ with $u_0 \in H_r$ a critical point of
$I_{\lambda_0}$. To conclude we just need to prove that
$m_{\lambda_n} \to m_{\lambda_0}.$ But, by the convergence $u_n
\to u_0$, we have that
\begin{equation}\label{up}
\lim_{n \to \infty} m_{\lambda_n} = \lim_{n \to \infty}I_{\la_n}(u_n) = I_{\la_0}(u_0) \geq m_{\lambda_0}.
\end{equation}
Now considering a sequence $(\la_n)$ increasing to $\la_0$, and using the fact that $\lambda \to
m_{\lambda}$ is non decreasing, we deduce from (\ref{up}) that  $m_{\lambda_n} \to m_{\lambda_0}.$ 
\end{proof}

\begin{proof}[Proof of Theorem~\ref{thautonomous}]
Gathering Lemmas~\ref{bounded-au},~\ref{PS-hold-au}
and~\ref{PS-hold-initial-au} we immediately conclude. 
\end{proof}
From Theorem~\ref{thautonomous} we deduce
\begin{corollary}\label{coautonomous}
Assume that (H1)-(H3) hold and that, for any $\lambda \in ]0,
\lambda^*[$ the set $G_{\lambda}$ contains only one positive
element $u_{\lambda}$. Then the map $\lambda \to u_{\lambda}$ is
continuous from $]0, \lambda^*[$ to $H_r$.
\end{corollary}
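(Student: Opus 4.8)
The plan is to deduce this statement from Theorem~\ref{thautonomous} exactly as Corollary~\ref{coabstract} is deduced from Theorem~\ref{thabstract}, namely by a contradiction argument, the only extra ingredient being a positivity argument at the very end. First I would suppose, for contradiction, that the map $\lambda \mapsto u_\lambda$ is not continuous at some $\lambda_0 \in ]0, \lambda^*[$. Then there exist a sequence $(\lambda_n) \subset ]0, \lambda^*[$ with $\lambda_n \to \lambda_0$ and a $\delta > 0$ such that $||u_{\lambda_n} - u_{\lambda_0}||_{H_r} \geq \delta$ for every $n$. Since, by hypothesis, $u_{\lambda_n}$ is the (unique) positive element of $G_{\lambda_n}$, in particular $u_{\lambda_n} \in G_{\lambda_n}$, so the sequence $\{(\lambda_n, u_{\lambda_n})\}$ satisfies the assumptions of Theorem~\ref{thautonomous}.

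Applying Theorem~\ref{thautonomous} I would obtain a $u_0 \in G_{\lambda_0}$ and a subsequence $(u_{\lambda_{n_k}})$ with $u_{\lambda_{n_k}} \to u_0$ in $H_r$. To contradict $||u_{\lambda_{n_k}} - u_{\lambda_0}||_{H_r} \geq \delta$ it then suffices to show $u_0 = u_{\lambda_0}$, and since the uniqueness hypothesis bears only on \emph{positive} least energy solutions, the point is to check that $u_0 > 0$. Passing to a further subsequence, strong convergence in $H_r$ yields $u_{\lambda_{n_k}} \to u_0$ almost everywhere on $\R^N$, whence $u_0 \geq 0$ because each $u_{\lambda_{n_k}}$ is positive. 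On the other hand $u_0 \in G_{\lambda_0}$, so $u_0 \not\equiv 0$ and $u_0$ solves $-\Delta u_0 + \lambda_0 u_0 = g(u_0)$. Using (H1) and the boundedness of $u_0$ (from standard elliptic regularity), one can write $g(u_0) = b(x)\, u_0$ with $b \in L^\infty(\R^N)$, so that $-\Delta u_0 + (\lambda_0 - b)\, u_0 = 0$ with a bounded zero-order coefficient, and the strong maximum principle forces $u_0 > 0$. Thus $u_0$ is a positive element of $G_{\lambda_0}$, hence $u_0 = u_{\lambda_0}$ by uniqueness, a contradiction. This proves the continuity of $\lambda \mapsto u_\lambda$; the continuity of $\lambda \mapsto m_\lambda$ is already contained in Theorem~\ref{thautonomous}.

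I expect the only genuinely new step, compared with the abstract Corollary~\ref{coabstract}, to be this identification of the limit: one must upgrade the almost-everywhere inequality $u_0 \geq 0$ to strict positivity in order to be entitled to invoke the uniqueness assumption. Everything else is a verbatim repetition of the contradiction scheme, so no further difficulty is anticipated. As an alternative to the maximum principle step, one could invoke directly the structural results of~\cite{ByJeMa} on the sign of least energy solutions, which make the positivity of $u_0$ immediate once $u_0 \geq 0$ a.e.\ and $u_0 \not\equiv 0$ are known.
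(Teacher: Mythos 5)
Your proposal is correct and follows essentially the same route as the paper: a contradiction argument via Theorem~\ref{thautonomous}, then upgrading the nonnegativity of the limit $u_0$ to strict positivity by the strong maximum principle so that the uniqueness hypothesis applies. You merely spell out in more detail the maximum principle step (and offer the alternative via \cite{ByJeMa}), which the paper only sketches.
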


\begin{proof}
Let $\lambda_0 >0$ be fixed and assume, by contradiction, that
there exist a sequence $(\lambda_n) \subset ]0, \lambda^*[$ with
$\lambda_n \to \lambda_0$ and a $\delta >0$ such that
$||u_{\lambda_n} - u_{\lambda_0}|| \geq \delta.$ Then we deduce
from Theorem~\ref{thautonomous} that $u_{\lambda_n} \to u_0 \in
G_{\lambda_0}.$ Since the nonnegative property is preserved by the convergence using the maximum principle we obtain that $u_0$ is positive and thus by uniqueness $u_0 =
u_{\lambda_0}$.
\end{proof}

\begin{remark}\label{uniqueness}
The problem of deriving conditions on $g$ which insure that~\eqref{3.01}
has a unique positive solution (and thus an unique positive ground state) has been extensively studied. The uniqueness is known to hold for a large class of nonlinearities. See for example \cite{PuSe} and the references therein in that direction.
\end{remark}

\begin{remark}\label{shatah-strauss}
A result essentially the same as Corollary \ref{coautonomous} was previously obtained in \cite{ShSt} (see also \cite{Sh}). Let us also point out that another proof of Theorem~\ref{thautonomous} follows from Proposition 5.5 of \cite{JeTa2}. The proofs that we give here, are different and we believe somehow simpler.
\end{remark}

\subsection{Non-autonomous cases}\label{non-autonomous}

We consider now the equation
\begin{equation}\label{33.1}
- \Delta u + \lambda u = V(x) g(u), \quad u \in H^1(\R^N)
\end{equation}
where we assume that $g \in C(\R, \R)$ satisfies in addition to
(H1)-(H2) 
\begin{itemize}
\item[(H4)] There exists $\mu >2$ such that
$$ 0 < \mu G(s) \leq g(s)s, \quad \forall s \in \R \, \mbox{ with } \, G(s):= \int_0^S g(t) dt.$$
\end{itemize}
On the potential $V \in C(\R^N, \R)$ we assume  \smallskip
\begin{itemize}
\item[(V)]  $V \geq 0, V \neq 0$ and either $V$ is radial or $\lim_{|x| \to \infty}V(x) = 0.$ \smallskip
\end{itemize}

Under (H1),(H2),(H4) and (V) it is standard to show
that~\eqref{33.1} admit, for any $\lambda
>0$, a non trivial solution as a critical point of the $C^1$ functional
$$ I_{\la}(u) = \frac{1}{2}\int_{\R^N}|\nabla u|^2 + \la |u|^2 \thinspace dx - \int_{\R^N} V(x) G(u) \thinspace dx.$$
Indeed, one just need to use the mountain pass theorem
(see~\cite{AmRa}). The boundedness of Palais-Smale sequence
follows from (H4) and because of (V) any bounded Palais-Smale sequence admits a converging subsequence. Thus one obtain a critical point at the mountain pass level that we denote $c_{\la} >0$. We now assume  \smallskip
\begin{itemize}
\item[(U)] For any $\lambda >0$,~\eqref{33.1} admits at most one positive
solution that we denote $u_{\lambda} \in H^1(\R^N)$. \smallskip
\end{itemize}

Without restriction (by a suitable modification of $g$ for $s <0)$ we can assume that the mountain pass solution is positive and thus that it coincide with $u_{\la}$. \medskip

Our result is the following
\begin{theorem}\label{thnonautonomous}
Assume that (H1),(H2),(H4) and (V), (U) hold. Then the map $\lambda
\to u_{\lambda}$ from $]0, + \infty[$ to $H^1(\R^N)$ is
continuous.
\end{theorem}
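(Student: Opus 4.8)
The plan is to deduce Theorem~\ref{thnonautonomous} from the abstract machinery of Section~\ref{Section2}, following the same pattern used for Theorem~\ref{thautonomous}. Fix $\lambda_0 > 0$ and, reasoning by contradiction as in Corollary~\ref{coautonomous}, suppose there is a sequence $\lambda_n \to \lambda_0$ with $\|u_{\lambda_n} - u_{\lambda_0}\| \geq \delta > 0$. After rescaling the parameter so that $\lambda_0$ plays the role of $1$ (i.e.\ writing $\mu = \lambda/\lambda_0$ and absorbing the factor into the quadratic part, or simply working directly with the family $(I_{\lambda})$ near $\lambda_0$), the goal is to verify the analogues of (A1)--(A4) and conclude that, up to a subsequence, $u_{\lambda_n}$ converges to a critical point $u_0$ of $I_{\lambda_0}$ at a level in a fixed compact interval $S$; then by the maximum principle (the limit is nonnegative, hence positive since it is nontrivial) and the uniqueness assumption (U), $u_0 = u_{\lambda_0}$, contradicting $\|u_{\lambda_n} - u_{\lambda_0}\| \geq \delta$.

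Concretely, I would carry out three steps mirroring Lemmas~\ref{bounded-au}--\ref{PS-hold-initial-au}. First, boundedness of $(u_{\lambda_n})$: since $u_{\lambda_n}$ is the mountain pass solution, $c_{\lambda_n} = I_{\lambda_n}(u_{\lambda_n})$ and, using $I_{\lambda_n}'(u_{\lambda_n})u_{\lambda_n} = 0$ together with (H4), one gets the standard estimate $c_{\lambda_n} \geq (\tfrac12 - \tfrac1\mu)\|u_{\lambda_n}\|_{H^1_{\lambda_n}}^2$ (up to comparing the $\lambda_n$-dependent norm with the standard one, which is harmless since $\lambda_n \to \lambda_0 > 0$), so it suffices to bound $(c_{\lambda_n})$. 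Monotonicity of $\lambda \mapsto c_{\lambda}$ — which follows from the mountain pass characterization exactly as for $m_{\lambda}$, because increasing $\lambda$ increases $I_{\lambda}$ pointwise on the relevant set where $u \not\equiv 0$ — gives $c_{\lambda_n} \leq c_{\lambda_0 + 1}$ for $n$ large, hence boundedness; this simultaneously shows $(c_{\lambda_n})$ lies in a bounded interval $S$, which is (A3) and (A2). Second, the Palais--Smale condition at $\lambda_0$: any bounded $(v_n)$ with $I_{\lambda_0}'(v_n) \to 0$ admits a convergent subsequence — this is precisely the compactness already invoked in the existence argument ("because of (V) any bounded Palais-Smale sequence admits a converging subsequence"), so (A4) holds for free. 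Third, the transfer lemma: writing $I_{\lambda_0}(u_{\lambda_n}) = I_{\lambda_n}(u_{\lambda_n}) + (\lambda_n - \lambda_0)\|u_{\lambda_n}\|_2^2$ and $I_{\lambda_0}'(u_{\lambda_n}) = I_{\lambda_n}'(u_{\lambda_n}) + (\lambda_n - \lambda_0)u_{\lambda_n}$, boundedness of $(u_{\lambda_n})$ forces $(u_{\lambda_n})$ to be a bounded Palais--Smale sequence for $I_{\lambda_0}$ with values in $S$, so (A1) (trivial here since $B(u) = \tfrac12\|u\|_2^2$ with $B'(u) = u$, both taking bounded sets to bounded sets) and the rest apply and Theorem~\ref{thabstract}(ii) yields the desired convergent subsequence.

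The one point requiring genuine care — the main obstacle — is identifying the limit $u_0$ with $u_{\lambda_0}$. Convergence plus (A4) only gives that $u_0$ is \emph{some} critical point of $I_{\lambda_0}$; it is a priori not the mountain pass solution and not even obviously nontrivial or of one sign. Nontriviality is recovered from the level: $I_{\lambda_0}(u_0) = \lim c_{\lambda_n} \geq c_{\lambda_0 - \eta} > 0$ for a fixed small $\eta$ (using lower semicontinuity/monotonicity to keep the level bounded below away from $0$), so $u_0 \neq 0$. The sign follows because each $u_{\lambda_n} \geq 0$ and $H^1$-convergence preserves this, then the strong maximum principle (applied to $-\Delta u_0 + \lambda_0 u_0 = V g(u_0) \geq 0$, using $g(s) \geq 0$ for $s \geq 0$, which the modification of $g$ for $s < 0$ and (H1),(H4) guarantee near the relevant range) makes $u_0 > 0$. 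Then (U) closes the argument: $u_0 = u_{\lambda_0}$. The continuity of $\lambda \mapsto u_{\lambda}$ on all of $]0, +\infty[$ then follows since $\lambda_0 > 0$ was arbitrary; as a byproduct one also obtains $c_{\lambda} \to c_{\lambda_0}$, i.e.\ continuity of the mountain pass level.
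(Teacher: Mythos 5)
Your proposal is correct and follows essentially the same route as the paper: contradiction via a sequence $\lambda_n \to \lambda_0$, boundedness of $(u_{\lambda_n})$ from (H4) and the mountain pass levels, transfer to a bounded Palais--Smale sequence for $I_{\lambda_0}$, compactness from (V), and identification of the limit with $u_{\lambda_0}$ via positivity and (U). The extra details you supply (monotonicity of $\lambda \mapsto c_\lambda$ to bound the levels, nontriviality of the limit from the lower level bound, and the maximum principle step) are exactly the points the paper leaves implicit, so they are a welcome but not divergent elaboration.
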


\begin{proof}
The proof follows closely the one of Theorem \ref{thautonomous} in the autonomous case. Our working space $H$ is $H^1(\R^N)$ if $\lim_{|x| \to \infty}V(x)
=0$ and $H^1_r(\R^N)$ if $V$ is radial. We assume, by
contradiction, that there exists a $\la_0 >0$, a sequence $(\lambda_n) \subset
]0, + \infty[$ with $\lambda_n \to \lambda_0$ and a $\delta >0$
such that $||u_{\lambda_n} - u_{\lambda_0}|| \geq \delta.$ First we show that the sequence $(u_n) \subset H$ is bounded. We have
$$I_{\lambda}(u) = \frac{1}{2}\int_{\R^N} |\nabla u|^2 + \lambda
|u|^2 dx - \int_{\R^N} V(x) G(u) \thinspace dx = c_{\lambda}.$$
$$ I'_{\lambda}(u)u = \int_{\R^N} |\nabla u|^2 + \lambda
|u|^2 dx - \int_{\R^N} V(x) g(u)u \thinspace dx = 0.$$ Thus, using (H4),
\begin{align*}\label{ex34}
      \frac{1}{2}\int_{\R^N} |\nabla u|^2 + \lambda
|u|^2 \thinspace dx    & = c_{\lambda} +  \int_{\R^N} V(x) G(u) \thinspace dx\\
& \leq c_{\lambda} + \frac{1}{\mu} \int_{\R^N} V(x)
g(u) u \thinspace dx  \\
\nonumber &  \leq c_{\lambda} + \frac{1}{\mu}\int_{\R^N} |\nabla
u|^2 + \lambda |u|^2 dx.
   \end{align*}
Hence $$\left(\frac{1}{2}- \frac{1}{\mu}\right) \int_{\R^N} |\nabla u|^2 +
\lambda |u|^2 dx \leq c_{\lambda}$$
and $(u_n) \subset H$ is indeed bounded. Now reasoning as in Lemma \ref{PS-hold-initial-au} we deduce that the sequence $(u_n) \subset H$ is a, bounded, Palais-Smale sequence for $I_{\la_0}$. Finally, following the proof of Lemma \ref{PS-hold-au} we can show that any bounded Palais-Smale sequence for $I_{\la_0}$ admit a converging subsequence. At this point, using the uniqueness, we deduce that $u_{\la_n} \to u_{\la_0}$ and this contradiction concludes the proof. 
\end{proof}

\begin{remark}\label{key}
The key feature that guarantee that the results presented in this Section hold is the fact that the mountain pass level coincides with the least energy level. In Theorem~\ref{thautonomous} it follows from the result of \cite{JeTa1} and in Theorem \ref{thnonautonomous} by the uniqueness of positive solutions. 
\end{remark}

\begin{remark}\label{genoud2}
In~\cite{Ge}, see Proposition 1, the analog of Theorem \ref{thautonomous} is establish for (\ref{33.1}) in the case where $g(u) = |u|^{p-1}u$ for $p \in ]1,
\frac{N+2}{N-2}[$. In addition when the uniqueness of $u_{\la}$ is assumed Theorem \ref{thnonautonomous} holds true. The proofs given in~\cite{Ge} use strongly the existence of a well defined Nehari manifold for $I_{\la}$. Using this manifold is possible when the function $s \to g(s)/s$ is stricly increasing. Under this condition it is now standard, see for example Lemma 1.2 in \cite{DeFe} or Proposition 3.11 in \cite{Ra}, that the mountain pass level $c_{\la}$ coincides with the least energy level. Thus we can recover and extend to the results of \cite{Ge} using the approach developped in Theorem~\ref{thautonomous}.  
\end{remark}

\begin{remark}
In equation (\ref{3.01}) we have restricted ourselves to $N \geq 3$. The case $N=2$ can also be treated under the assumption that $p \in ]1, + \infty[$. Some additional work however is necessary at the level of Lemma \ref{bounded-au} to show the boundedness of the sequence $(u_n) \subset H$. See \cite{JeTa2} in that direction.
\end{remark}

\begin{remark}
The only purpose of condition (H4) is to insure the boundedness of Palais-Smale sequence for $I_{\la}$ (or of sequences of critical points of $I_{\la_n}$). Alternative conditions are possible.
\end{remark}

\section{On a non-homogeneous problem}\label{Section4}

We consider here
\begin{equation} \label{4.1}
- \Delta u = |u|^{p-1}u + f(x), \quad u \in H_0^1(\Omega)
\end{equation}
where $\Omega \subset \R^N, N \geq 3$ is an open bounded regular
domain, $1 < p < \frac{N+2}{N-2}$ and $f \in L^{q} (\Omega)$ for
some $q > \frac{N}{2}$. We prove the following result

\begin{theorem}\label{th:posremain}
Under the assumptions above there exists a $\alpha >0$ such that when $||f||_{q}
\leq \alpha$ the equation (\ref{4.1}) admits a positive solution on
$\Omega$.
\end{theorem}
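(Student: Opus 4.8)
The plan is to produce the solution by a continuation argument anchored at $f=0$, in the spirit of Section~\ref{Section2}. To keep control of the sign I would work not with \eqref{4.1} directly but with the truncated functional
\[
\widehat I_f(u)=\frac12\int_\Omega|\nabla u|^2\,dx-\frac{1}{p+1}\int_\Omega (u^+)^{p+1}\,dx-\int_\Omega f\,u\,dx,\qquad u\in H^1_0(\Omega),
\]
whose critical points solve $-\Delta u=(u^+)^p+f$. Since $q>\frac N2$ gives $L^q(\Omega)\hookrightarrow H^{-1}(\Omega)$ and $1<p<\frac{N+2}{N-2}$, it is standard that for $\|f\|_q$ small $\widehat I_f$ has a mountain pass geometry ($\widehat I_f(0)=0$, $\widehat I_f>0$ on a small sphere $\{\|u\|=r\}$, $\widehat I_f\to-\infty$ along a fixed positive ray) and satisfies the Palais--Smale condition ($\Omega$ bounded, $p$ subcritical). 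This yields a critical point $u_f$ at a level $\widehat c_f$ that lies in a fixed compact subinterval of $(0,\infty)$ for all small $\|f\|_q$. Testing $\widehat I_f'(u_f)=0$ against $-u_f^-$ gives $\|\nabla u_f^-\|_2^2=-\int_\Omega f\,u_f^-\le C\|f\|_q\|\nabla u_f^-\|_2$, hence $\|u_f^-\|_{H^1_0}\le C\|f\|_q$; the point of the truncation is precisely to pin down the sign in the limit.

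Assume, for contradiction, that the statement fails: there is a sequence $f_n\to 0$ in $L^q$ such that \eqref{4.1} with $f=f_n$ has no positive solution. Put $u_n:=u_{f_n}$. Exactly as in Lemma~\ref{PS-hold-initial-au}, the uniform energy bound together with $\widehat I_{f_n}'(u_n)=0$ forces $(u_n)$ to be bounded in $H^1_0$ and to be a Palais--Smale sequence for $\widehat I_0$; hence, up to a subsequence, $u_n\to w_0$ in $H^1_0$ with $w_0$ a critical point of $\widehat I_0$ at a level bounded away from $0$, so $w_0\neq0$. From $\|u_n^-\|_{H^1_0}\to0$ we get $w_0\ge0$, hence $-\Delta w_0=w_0^p$ with $w_0\ge0$, $w_0\not\equiv0$, and the strong maximum principle gives $w_0>0$ in $\Omega$. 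A standard elliptic bootstrap (subcriticality of $p$ together with $(f_n)$ bounded in $L^q$, $q>\frac N2$) upgrades $u_n\to w_0$ to convergence in $C^0(\overline\Omega)$, and Schauder estimates give $w_0\in C^2(\overline\Omega)$, so $\partial_\nu w_0<0$ on $\partial\Omega$ by Hopf's lemma; in particular $w_0\ge c_0\varphi_1$ for some $c_0>0$, $\varphi_1$ the first Dirichlet eigenfunction.

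The conclusion follows once we know $u_n>0$ in $\Omega$ for $n$ large, since then $-\Delta u_n=u_n^p+f_n$ exhibits a positive solution of \eqref{4.1}, a contradiction. On every interior region $\{d(x)\ge\varepsilon\}$ this is immediate from $u_n\to w_0$ in $C^0(\overline\Omega)$ and $w_0>0$. On the boundary collar $\{0<d(x)<\varepsilon\}$ one argues by comparison: there $-\Delta u_n=(u_n^+)^p+f_n\ge-|f_n|$, while on the inner boundary $\{d(x)=\varepsilon\}$ one has $u_n\ge\tfrac12\min_{\{d=\varepsilon\}}w_0>0$ for $n$ large and $u_n=0$ on $\partial\Omega$; comparing $u_n$ with the solution $\psi_n$ of $-\Delta\psi_n=-|f_n|$ on the collar carrying these boundary data, and using that $\|f_n\|_q\to0$ against the distance function $d(x)$, gives $u_n\ge\psi_n>0$ there. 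This closes the argument.

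I expect the genuinely delicate step to be this last one, the transfer of the strict positivity of the limit $w_0$ to the approximants \emph{up to} $\partial\Omega$: convergence in $H^1_0$, or even in $C^0(\overline\Omega)$, is too crude near the boundary where $w_0$ vanishes, and one has to quantify the effect of the $L^q$-perturbation $f_n$ near $\partial\Omega$ relative to $d(x)$ — this is exactly where the smallness of $\|f_n\|_q$, rather than its mere finiteness (which already yields existence), enters, together with the regularity of $\partial\Omega$. When $q>N$ this step is painless, since then the bootstrap gives $u_n\to w_0$ in $C^1(\overline\Omega)$ and hence $u_n/\varphi_1\to w_0/\varphi_1>0$ uniformly on $\overline\Omega$; for $\frac N2<q\le N$ the barrier estimate above is the needed substitute. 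Everything else is a direct transcription of the Palais--Smale/compactness scheme already used in Section~\ref{Section3}.
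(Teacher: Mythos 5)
Your overall scheme is the paper's own: truncate the nonlinearity so that critical points are forced to be nonnegative, get a mountain pass critical point $u_f$ at a level bounded below by a fixed $\gamma>0$ for $\|f\|_q$ small (this is Lemma \ref{le:mp}), argue by contradiction along a sequence $f_n\to 0$, observe that $(u_n)$ is a bounded Palais--Smale sequence for the limit functional, pass to a strongly convergent subsequence $u_n\to w_0$ with $w_0$ a nontrivial critical point of the limit problem (\ref{2}), and invoke Hopf's lemma for $w_0$. (Your extra estimate $\|u_n^-\|\le C\|f_n\|_q$ is harmless but not needed: nonnegativity of $w_0$ is automatic for critical points of the truncated limit functional.) Where you diverge is the last step, the transfer of strict positivity from $w_0$ to $u_n$ up to $\partial\Omega$: the paper does this by asserting $u_n\to w_0$ in $C^1(\overline\Omega)$ and using the strict sign of the normal derivative of $w_0$, while you keep only interior uniform convergence and try to handle the boundary collar by comparison with $\psi_n$ solving $-\Delta\psi_n=-|f_n|$.

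That collar argument is where the gap is, and it is not a fixable detail. Writing $\psi_n=h-z_n$ with $h$ harmonic, $h\ge c\,d(x)$ by Hopf, and $-\Delta z_n=|f_n|$, $z_n=0$ on the collar boundary, you need $z_n\le c\,d(x)$, i.e.\ essentially $z_n\le C\|f_n\|_q\,d(x)$. The Green's function bound $G(x,y)\le C\,d(x)|x-y|^{1-N}$ gives this only when $|x-\cdot|^{1-N}\in L^{q'}$, i.e.\ when $q>N$ --- exactly the regime in which, as you note, the barrier is not needed because $W^{2,q}\hookrightarrow C^1(\overline\Omega)$ already yields the conclusion. For $\frac N2<q<N$ the required bound is false: taking $f_n=-\e_n|x-x_0|^{-\sigma}$ near a boundary point $x_0$ with $1<\sigma<\frac Nq$ (possible precisely because $q<N$), one gets $z_n\gtrsim \e_n\,d(x)^{2-\sigma}\gg d(x)$ near $x_0$, so $\psi_n$ is negative in part of the collar no matter how small $\e_n$ is, and the comparison yields nothing. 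Worse, applying the same computation directly to any putative positive solution ($-\Delta u\le \|u\|_\infty^p+f$ near $x_0$, then comparison along the normal at $x_0$) shows that $u$ must become negative near $x_0$; so for $\frac N2<q<N$ smallness of $\|f\|_q$ alone cannot force positivity up to the boundary, and no barrier of this type can succeed. You have in fact put your finger on the one thin point of the paper's own proof --- the claimed $C^1(\overline\Omega)$ convergence only follows from $f_n\in L^q$ when $q>N$ (for $q\le N$ one only gets $C^{0,2-N/q}(\overline\Omega)$) --- but your proposed substitute for $q\le N$ does not close it, and for $q<N$ it cannot.
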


We set $g(s) = s^p$ if $s \geq 0$, $g(s) = 0$ if $s \leq 0$ and $G(s) = \int_0^s g(t) \thinspace dt.$
The functional associated to (\ref{4.1}) is defined on $H:= H^1_0(\Omega)$ by
$$I_f(u) = \frac{1}{2}\int_{\Omega}|\nabla u|^2 dx -
\int_{\Omega}G(u) dx - \int_{\Omega}fu \thinspace dx.$$
Under our assumptions it is standard to show that $I \in C^1(H, \R)$. We shall work with the norm $||u|| := ||\nabla u||$ on $H$.
\begin{remark}  The point of Theorem \ref{th:posremain} is the existence of a
positive solution on $\Omega$ without assuming a sign on $f$. If $f \geq 0$  the result follows directly from the existence of a critical point for $I_f$. 
\end{remark}

\begin{lemma}\label{le:mp}
Under the assumptions of Theorem \ref{th:posremain} there exists a $\beta >0$ and a $\gamma >0$ such that for any $f$ satisfying $||f||_q \leq \beta$ the functional $I_f$ has a critical point $u_f$ at a value $c_f \geq \gamma >0$.
\end{lemma}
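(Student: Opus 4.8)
The plan is to obtain $u_f$ as a mountain pass critical point of $I_f$, following the classical Ambrosetti--Rabinowitz scheme, but tracking all constants so that both the geometry and the critical level are controlled uniformly for $\|f\|_q$ small.

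First I would verify a \emph{uniform} mountain pass geometry. Writing $q'$ for the conjugate exponent of $q$, the hypothesis $q > N/2$ gives $q' < 2^* = \tfrac{2N}{N-2}$, so both $H_0^1(\Omega)\hookrightarrow L^{p+1}(\Omega)$ and $H_0^1(\Omega)\hookrightarrow L^{q'}(\Omega)$ are continuous; hence there are $C_1,C_2>0$ independent of $f$ with
$$I_f(u) \geq \tfrac12\|u\|^2 - C_1\|u\|^{p+1} - C_2\|f\|_q\|u\|.$$
Since $p>1$, one fixes $\rho>0$ so small that $C_1\rho^{p+1}\leq \tfrac14\rho^2$, then sets $\beta := \rho/(8C_2)$, so that $I_f(u)\geq \gamma := \rho^2/8 > 0$ on $\|u\|=\rho$ for every $f$ with $\|f\|_q\leq\beta$. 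For the descent direction I would fix once and for all some $\phi\in H_0^1(\Omega)$, $\phi\geq 0$, $\phi\not\equiv 0$, and note that $I_f(t\phi)=\tfrac{t^2}{2}\|\phi\|^2 - \tfrac{t^{p+1}}{p+1}\int_\Omega \phi^{p+1}\,dx - t\int_\Omega f\phi\,dx \to -\infty$ as $t\to+\infty$, uniformly for $\|f\|_q\leq\beta$ since the last term is at most $\beta\|\phi\|_{q'}t$ in absolute value; thus there is $t_0>0$, independent of such $f$, with $\|t_0\phi\|>\rho$ and $I_f(t_0\phi)<0$.

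Next, the mountain pass theorem yields a Palais--Smale sequence $(u_n)$ for $I_f$ at the minimax level $c_f$ associated with paths joining $0$ to $t_0\phi$, and $\gamma \leq c_f \leq \max_{t\in[0,1]} I_f(t\,t_0\phi)$, the right-hand side being bounded uniformly for $\|f\|_q\leq\beta$. I would then prove $(u_n)$ bounded using that $g(s)s=(p+1)G(s)$, i.e. the Ambrosetti--Rabinowitz condition with $\mu=p+1>2$: evaluating $I_f(u_n)-\tfrac{1}{p+1}I_f'(u_n)u_n$ the primitive terms cancel and one is left with $(\tfrac12-\tfrac1{p+1})\|u_n\|^2$ bounded by $c_f + o(1) + o(\|u_n\|) + C\|u_n\|$. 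Finally, boundedness plus the \emph{compact} embedding $H_0^1(\Omega)\hookrightarrow L^{p+1}(\Omega)$ (and $\hookrightarrow L^{q'}(\Omega)$) gives, along a subsequence, $u_n\rightharpoonup u$ in $H$ with $u_n\to u$ strongly in those Lebesgue spaces; then $\int_\Omega g(u_n)(u_n-u)\,dx\to 0$ and $\int_\Omega f(u_n-u)\,dx\to 0$, and combining with $I_f'(u_n)(u_n-u)\to 0$ one obtains $\|u_n\|^2\to\|u\|^2$, hence $u_n\to u$ in $H$. Setting $u_f:=u$ gives a critical point of $I_f$ at level $c_f\geq\gamma>0$.

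The genuinely routine parts are the Sobolev estimates and the standard Palais--Smale analysis; the point that needs care — and the reason this is stated as a separate lemma — is the uniformity in $f$: the numbers $\rho$, $\beta$, $\gamma$, $t_0$ must be frozen so that the whole construction works simultaneously for every admissible $f$ and delivers a critical level bounded away from $0$. I do not expect a serious obstacle, only bookkeeping; the one inequality to double-check is that $\beta$ can indeed be taken small enough that $C_2\beta\rho < \tfrac14\rho^2$, which is what lets the barrier at $\|u\|=\rho$ survive the linear perturbation by $f$.
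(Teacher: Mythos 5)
Your proposal is correct and follows essentially the same route as the paper: the same Sobolev--H\"older lower bound giving a uniform mountain pass geometry for $\|f\|_q$ small, boundedness of Palais--Smale sequences via $I_f(u_n)-\tfrac{1}{p+1}I_f'(u_n)u_n$, compactness from the bounded domain, and the Ambrosetti--Rabinowitz theorem. The only difference is that you spell out the strong-convergence step that the paper dismisses as standard, which is fine.
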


\begin{proof}
Let $||f||_q \leq \beta $ with $\beta >0$ to be determined later. First we show that the functional $I_f$ has a mountain pass geometry in the sense that $I_f(0) = 0$ and
\begin{itemize}
\item[(i)] There exist $a>0$, $b >0$ such that if $||u|| = a$ then $I_f(u) \geq b.$ \smallskip
\item[(ii)] There exists a $v \in H$ with $||v|| >a$ such that $I_f(v) \leq 0$. \smallskip
\end{itemize}
To prove (i) observe that, by Holder and Sobolev embeddings, for $1/q + 1/q' =1$ and a $C>0$,
\begin{align}
I_f(u) & = \frac{1}{2}|| u||^2 - \frac{1}{p+1}||u^+||_{p+1}^{p+1} - ||f||_q ||u||_{q'}  \\
\nonumber & \geq \frac{1}{2}|| u||^2  - C ||u||^{p+1} - C ||f||_q ||u||.
\end{align}
We first fix $a >0$ sufficiently small so that
$$\frac{1}{2}|| u||^2  - C ||u||^{p+1} \geq \frac{1}{4}||u||^2 \quad \mbox{if} \quad ||u||=a.$$
Then we choose $\beta >0$ sufficiently small so that $ C \, ||f||_q \, a \leq \frac{1}{8} a^2.$ At this point $(i)$ hold. 
To show (ii) it suffices to observe that taking a $u \in H$ with $u >0$ on $\Omega$ one has $I_f(tu) \to - \infty$ as $t \to + \infty.$  \medskip

Next we show that the Palais-Smale condition holds, namely that any Palais-Smale sequence admits a convergent subsequence. Let $(u_n) \subset H$ be a Palais-Smale sequence for $I_f$ at a level $c_f \in \R$. We have, for $n \in \N$ large enough,
\begin{align}\label{4.10}
c_f +1 + ||u_n|| & \geq  I_f(u_n) - \frac{1}{p+1}I_{f}'(u_n)u_n   \\
\nonumber & =  \Big(\frac{1}{2}- \frac{1}{p+1}\Big) ||u_n||^2 - \Big(1-\frac{1}{p+1}\Big) \int_{\Omega} fu_n \thinspace dx \\
\nonumber & \geq \Big(\frac{1}{2}- \frac{1}{p+1}\Big) ||u_n||^2 -\frac{p}{p+1} ||f||_{H^{-1}}||u_n||
\end{align}
and thus $(u_n) \subset H$ is indeed bounded. The fact that $(u_n) \subset H$ admit a convergent subsequence is now standard since we work on a bounded domain. At this point the assumption of the mountain pass theorem, see \cite{AmRa}, are satisfied and the lemma follows.
\end{proof}

\begin{proof}[Proof of Theorem \ref{th:posremain}]
In view of Lemma \ref{le:mp} it just remains to show, by possibly decreasing the value of $\beta >0$, that the solution $u_f \in H$ is positive on $\Omega$. For this we consider the limit problem
\begin{equation}\label{2}
 - \Delta u = g(u).
\end{equation}
The functional associated to  (\ref{2}) is
$$ I(u) =\frac{1}{2}\int_{\Omega}|\nabla u|^2 dx -
\int_{\Omega}G(u) dx. $$ 
Clearly, by construction of $g$, any critical point of $I$ is nonnegative. Moreover if $u \in H$ is a non trivial critical point of $I$, by the Hoft maximum principle we obtain that $u >0$ on $\Omega$ and
also that its normal derivatives are strictly positive on $\partial \Omega$. \medskip

Now we assume, by contradiction, that there exists a sequence $(f_n) \subset L^q(\Omega)$ such that $f_n \to 0$ in $L^q(\Omega)$ for which the corresponding sequence $u_n := u_{f_n}$ remains non positive for any $n \in \N$. Clearly we shall reach a contradiction if we manage to show that, up to a subsequence $u_n \to u$ where 
$u \in H$ is a non trivial solution for the problem
(\ref{2}). Indeed starting from $u_n \to u$
in $H$, by standard elliptic regularity estimates, it
follows that $u_n \to u$ in $C^1(\bar{\Omega})$. 
\medskip

To show this we first observe that $(u_n) \subset H$ is bounded. Indeed, from (\ref{4.10})  where $f$ is replaced by $f_n$ it is the case if $(c_{f_n}) \subset \R^+$ remains bounded. But taking any fixed $u \in H$ with $u >0$ we have, for a $C>0$,
\begin{align}\label{boundPS}
c_{f_n} & \leq  \max_{t >0} I_{f_n}(tu)  \\
\nonumber & \leq   \frac{1}{2} t^2 ||u||^2 - \frac{t^p}{p+1} ||u||^{p+1}_{p+1} + t \int_{\Omega} |f_n| u \thinspace dx  
\end{align}
and the bound on $(c_{f_n}) \subset \R^+$ follows. Now when $n \to + \infty$,
$$I(u_n) = I_{f_n}(u_n) + \int_{\Omega}f_nu_n \thinspace dx$$ remains bounded
since $(c_{f_n}) \subset \R^+$ is bounded and $\int_{\Omega}f_nu_n dx \to 0$ (because $(u_n) \subset H$ is bounded). Also
$$I'(u_n) = I'_{f_n}(u_n) + f_n \to 0 \quad \mbox{ in } H^{-1}.$$
Thus $(u_n) \subset H$ is a bounded Palais-Smale sequence for $I$.
To conclude we observe that since $(u_n) \subset H$ is a bounded Palais-Smale sequence for $I$ it converges strongly,  up to a subsequence, in
$H$ towards a non trivial critical point of $I$. The fact that it is non trivial follows from the estimate $c_{f_n} \geq \gamma >0. $ This ends the proof.
\end{proof}

\begin{remark}\label{r:1}
Clearly under the assumptions of Theorem \ref{th:posremain}, equation (\ref{th:posremain}) admits a second solution as a local minimum of $I_f$. But when $f \to 0$ this solution converges to $0$ and there is no reason for it to be positive.
\end{remark}

\begin{remark}\label{r:2}
A numerous literature (see for example \cite{AdTa, Je}) is devoted to the problem of finding two solutions or more, for equations of the form
\begin{equation}\label{onR}
- \Delta u + u = a(x) |u|^{p-1}u + f(x), \quad u \in H^1(\R^N).
\end{equation}
So far, up to our knowledge, multiple solutions are obtained only under the assumption that $f\geq 0$ on $\R^N$ (and $||f||_{H^{-1}}$ small enough). An interesting question would be to study if for problems of the type of (\ref{onR}) a multiplicity result can be obtained without requiring $f$ to be non negative. In that direction we suspect that the approach followed in Theorem \ref{th:posremain} could proved useful.
\end{remark}

\noindent {\bf Acknowledgements:} The author would like to thanks Professors M. Ohta, C.A. Stuart, K. Tanaka and Doctor F. Genoud for useful remarks on a preliminary version of this work.

\vskip20pt


\begin{thebibliography}{99}

\bibitem{AdTa}
{\sc S.\ Adachi and K. Tanaka}, Four positive solutions for the semilinear elliptic equation : $- \Delta u + u = a(x) u^p + f(x) \in \R^N$,
{\em Calc. Var. Partial Differential Equations},  {\bf 11}, (2000), 63-95.

\bibitem{AmRa}
{\sc A.\ Ambrosetti and P.H. Rabinowitz}, Dual variational methods
in critical point theory and applications, {\em J. Functional
Analysis},  {\bf 14}, (1973), 349-381.

\bibitem{BeLi}
{\sc H.~Berestycki, P.L.~Lions}, Nonlinear scalar field equations
I, {\em Arch. Rat. Mech. Anal. } \textbf{82} (1983), 313--346.

\bibitem{ByJeMa}
{\sc J.~Byeon, L.~Jeanjean, M.~Maris}, Symmetry and monotonicity
of least energy solutions, {\em Calc. Var. Partial Differential
Equations}, \textbf{36}, (2009), 481-492.

\bibitem{DeFe}
{\sc  M. del Pino and P. Felmer,} { Local mountain passes for semilinear elliptic problems in unbounded domains,}
 {Calculus of Variations and PDE}, {\bf 4}, (1996), 121-137.

\bibitem{Ge}
{\sc F.\ Genoud}, A smooth global branch of solutions for a
semilinear elliptic equation on $\R^N$, {\em Calc. Var. Partial
Differential Equations}, {\bf 38}, (2010), 207-232.

\bibitem{Je}
{\sc L.\ Jeanjean}, Two positive solutions for a class of
nonhomogeneous elliptic equations, {\em Diff. Int. Equations},
{\bf 10}, (1997), 609-624.

\bibitem{Je2}
{\sc L.\ Jeanjean}, On the existence of bounded Palais-Smale
sequences and application to a Landesman-Lazer-type problem set on
$\R^N$, {\em Proc. Roy. Soc. Edinburgh}, {\bf 129A}, (1999),
787-809.

\bibitem{JeTa1}
{\sc  L. Jeanjean and K. Tanaka,} A remark on least energy solutions in
$\R^N$, {\em  Proc Amer. Math. Soc.} {\bf 131}, (2003), 2399-2408.

\bibitem{JeTa2}
{\sc  L. Jeanjean and K. Tanaka,} { Singularly perturbed elliptic problems with superlinear and asymptotically linear linearities,}
 {Calculus of Variations and PDE}, {\bf 21}, (2004), 287-318.


\bibitem{PuSe}
{\sc  P. Pucci and J. Serrin,} {Uniqueness of ground states for quasilinear elliptic operators,} {Indiana Univ. Math. J.}, {\bf 47},
(1998), 501-528.

\bibitem{Ra}
{\sc  P.H. Rabinowitz,} {On a class of nonlinear Schrodinger equations,} {Z. Angew. Math. Phys.}, {\bf 43},
(1992), 270-291.

\bibitem{Sh}
{\sc  J. Shatah,} {Stable standing waves of nonlinear Klein-Gordon equations,} {Comm. Math. Phys.}, {\bf 91},
(1983), 313-327.

\bibitem{ShSt}
{\sc  J. Shatah and W.A. Strauss} {Instability of nonlinear bound states,} {Comm. Math. Phys.}, {\bf 100},
(1985), 173-190.

\bibitem{St}
{\sc  W.A. Strauss,} {Existence of solitary waves in higher dimensions} {Comm. Math. Phys.}, {\bf 55},
(1977), 149-162.




\end{thebibliography}
\end{document}